\documentclass[11pt,twoside]{amsart}
\usepackage{amsfonts}
\usepackage[latin9]{inputenc}
\usepackage{amsmath}
\usepackage{amssymb}
\usepackage{breqn}
\usepackage{url}
\usepackage{graphicx}
\usepackage[pdfstartview=FitH]{hyperref}
\usepackage{amsthm}
\usepackage{hyperref}
\usepackage{url}
\usepackage{cleveref}
\usepackage{verbatim}
\usepackage{bbm}

\crefformat{section}{\S#2#1#3} 
\crefformat{subsection}{\S#2#1#3}
\crefformat{subsubsection}{\S#2#1#3}


\newcommand{\B}{\operatorname{\mathbb{E}}}
\newcommand{\Sym}{\operatorname{Sym}}

\newcommand{\id}{\operatorname{id}}

\newcommand{\sides}{\operatorname{sides}}

\newcommand{\Span}{\operatorname{span}}
\newcommand{\type}{\operatorname{type}}

\newcommand{\bipartite}{\operatorname{bipartite}}

\makeatletter
\begin{document}
\newtheorem{theorem}{Theorem}[section]
\newtheorem{lemma}[theorem]{Lemma}
\newtheorem{claim}[theorem]{Claim}
\newtheorem{proposition}[theorem]{Proposition}
\newtheorem{corollary}[theorem]{Corollary}
\theoremstyle{definition}
\newtheorem{definition}[theorem]{Definition}
\newtheorem{observation}[theorem]{Observation}
\newtheorem{example}[theorem]{Example}
\newtheorem{remark}[theorem]{Remark}

\title{Property (T) for groups acting on affine buildings}

\author{Izhar Oppenheim}
\address{Department of Mathematics, Ben-Gurion University of the Negev, Be'er Sheva,  Israel}
\email{izharo@bgu.ac.il}



\maketitle
\begin{abstract}
We prove that a group acting geometrically on a thick affine building has property (T).  A more general criterion for property (T) is given for groups acting on partite complexes.
\end{abstract} 

\section{Introduction}

 Recall that a locally compact group $G$ is said to act geometrically on a simplicial complex $X$,  if it acts on $X$ properly and cocompactly.

The aim of this note is to prove the following Theorem:
\begin{theorem}
\label{main thm}
Let $X$ be a thick affine building of dimension $\geq 2$, i.e., an affine building of dimension $\geq 2$ in which every codimension-one simplex is a face of at least three chambers.  If $G$ is a locally compact, unimodular group that acts geometrically on $X$,  then $G$ has property (T).
\end{theorem}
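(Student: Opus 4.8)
The plan is to establish property (T) through a local-to-global spectral argument of Garland type, organized around the partite criterion announced in the abstract. A thick affine building $X$ of dimension $n \geq 2$ is contractible (being CAT(0)) and carries a canonical type function assigning to each vertex a color in $\{0, 1, \dots, n\}$, with every chamber containing exactly one vertex of each color; this exhibits $X$ as an $(n+1)$-partite simplicial complex. After replacing $G$ by the finite-index subgroup preserving the coloring — which changes nothing for property (T) — I may assume the action is type-preserving. I would then use the characterization that, for a compactly generated locally compact group (and the cocompact action on the connected building $X$ forces $G$ to be compactly generated), property (T) is equivalent to the vanishing of the first reduced cohomology $\overline{H}^1(G, \pi)$ for every unitary representation $\pi$, and realize these cohomology groups on the partite complex $X$. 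Here the hypotheses that $G$ acts properly and cocompactly and that $G$ is unimodular are exactly what make Garland's averaging over the compact quotient legitimate and turn the relevant spaces of $L^2$-cochains into honest unitary $G$-modules.

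The next step is to reduce this cohomological vanishing to a purely local spectral condition on the links. In any building the link of a codimension-$2$ simplex is a rank-$2$ spherical building, that is, the incidence graph of a generalized $m$-gon; for affine buildings the affine Coxeter diagrams force $m \in \{2, 3, 4, 6\}$, so only finitely many local isomorphism types occur in $X$. These incidence graphs are bipartite and biregular, and — crucially — their two sides are precisely the two color classes of the ambient partite structure not used up by the codimension-$2$ face. Thickness of $X$ guarantees that each such generalized polygon has order at least $2$, and the classical eigenvalue computation for generalized polygons (going back to Feit and Higman) expresses its normalized spectral data in terms of $\cos(\pi/m)$ and the order, yielding a uniform estimate over the finitely many types.

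The main obstacle is that the classical \.{Z}uk threshold $\lambda_1 > \tfrac{1}{2}$ on the vertex links, which suffices for $\tilde A$-type buildings (projective-plane links), simply \emph{fails} for the thick generalized quadrangles and hexagons occurring as links in buildings of types $\tilde B$, $\tilde C$, $\tilde F_4$ and $\tilde G_2$: already the smallest thick quadrangle and the smallest thick hexagon have $\lambda_1$ well below $\tfrac{1}{2}$. This is exactly the point at which the partite refinement must do its work. Rather than the full Laplacian of the link, I would track the off-diagonal bipartite averaging operator between the two color classes and compare its second singular value against the weaker, color-sensitive threshold supplied by the partite criterion; the content of that criterion is that this relaxed condition — which every thick rank-$2$ residue does satisfy — still propagates to a global spectral gap. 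Setting up the partite criterion so that the weak per-link bounds guaranteed by mere thickness are enough is where I expect the real difficulty to lie.

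Finally, I would carry out the propagation itself: a Garland-type summation identity writes the global quadratic form on the relevant $L^2$-cochains as a weighted average of the local forms attached to the codimension-$2$ links, so the uniform local bounds established above produce a strictly positive lower bound on the global form. This forces $\overline{H}^1(G, \pi) = 0$ for every unitary $\pi$, and hence, via the geometric action and unimodularity, yields property (T) for $G$.
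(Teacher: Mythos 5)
Your proposal correctly assembles the geometric skeleton: the partite structure of $X$, the reduction to a type-preserving finite-index subgroup, the identification of codimension-two links as incidence graphs of generalized $m$-gons with $m \in \{2,3,4,6\}$, and the key obstacle that the classical \.{Z}uk-type threshold fails for thick quadrangles and hexagons of small order. But at exactly the point where the theorem's content lives, the proposal is a placeholder: you never state what the ``color-sensitive threshold'' actually is, never prove that it propagates to a global conclusion, and explicitly defer this step (``where I expect the real difficulty to lie''). That deferred step \emph{is} the theorem. In the paper the criterion is concrete: one forms the $(n+1)\times(n+1)$ matrix $\mathcal{A}(X)$ with $1$'s on the diagonal and $(i,j)$ entry $-\lambda_{i,j}$, where $\lambda_{i,j}$ is the maximum, over codimension-two links of type $\{0,\dots,n\}\setminus\{i,j\}$, of the second largest random-walk eigenvalue; the hypothesis is that $\mathcal{A}(X)$ is \emph{positive definite}. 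This is a joint condition on the family of links organized by type pairs, not a uniform scalar bound per link, and the verification that thick affine buildings satisfy it (via the Feit--Higman eigenvalue computations for generalized polygons) is itself a separate input, Lemma \ref{pos def lemma}, which your sketch gestures at but does not supply.

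Moreover, the propagation mechanism you propose --- a Garland-type summation identity writing the global quadratic form as a weighted average of local link forms --- is precisely what does \emph{not} work below the classical threshold; if a plain averaging identity sufficed, no partite refinement would be needed at all. The paper's propagation is of a different nature: for each color $i$ it forms the subspace $U_i$ of $\pi$-equivariant maps on chambers that are constant along the relation ``sharing a codimension-one face of type $\{0,\dots,n\}\setminus\{i\}$,'' bounds the pairwise angles $\cos\angle(U_i,U_j)$ by the bipartite link eigenvalues $\lambda_{i,j}$, and then applies Kassabov's theorem on arrangements of subspaces (Theorem \ref{Kas Thm}), with positive definiteness of $\mathcal{A}(X)$ guaranteeing the needed control of the projection onto $U_0\cap\dots\cap U_n$, which consists exactly of constant functions valued in $\mathcal{H}^{\pi(G)}$. (The paper then runs a direct Kazhdan-pair argument with an explicit $\varepsilon$ rather than your route through vanishing of $\overline{H}^1(G,\pi)$; that cohomological framing is plausible in principle, but it would still require the same angle machinery, so it is not a substitute for the missing step.) Without the matrix criterion, the angle estimates, and the positive-definiteness input for thick buildings, the proposal does not yet contain a proof.
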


We note that for classical affine buildings,  i.e.,  affine buildings arising from a BN-pair of an algebraic group over a non-archimedean field,  this result can be deduced from the fact that higher rank algebraic groups and their lattices have property (T).  Also,  when $X$ is an exotic $\widetilde{A}_2$ building, this result readily follows from the so called Zuk's criterion (see \cite{Ballmann}).  However, when $X$ is an exotic building of type $\widetilde{C}_2$ or $\widetilde{G}_2$, the existing results in the literature require the building thickness to be sufficiently large and do not cover buildings of small thickness (see \cite[Table 2]{OppAve}). Part of the motivation for this note is the recently announced result by Titz Mite and Witzel \cite{TMW}, in which they construct a non-residually finite group acting properly and cocompactly on an exotic $\widetilde{C}_2$ building with small thickness.

We prove Theorem \ref{main thm} by establishing a general criterion for property (T) for groups acting geometrically on partite simplicial complexes - a result of independent interest.  Notably, we only require a weaker assumption concerning the properness of the action (see details below).

\section{Preliminaries}

\subsection{Angle between subspaces}

\begin{definition}
[Angle between subspaces]\cite[Definition 3.2, Remark 3.19]{Kassabov} Let $\mathcal{H}$ a Hilbert space and 
$U_1$ and $U_2$ be two closed subspaces in $\mathcal{H}$. The cosine of $\angle  (U_1,U_2)$ is defined as $0$ if $U_1\subseteq U_2 \; or \; U_2\subseteq U_1$ and otherwise as
$$
\cos \angle (U_1,U_2)=
\sup \{\vert \left\langle u_1,u_2\right\rangle\vert \; :\;\Vert u_i\Vert =1, u_i\in U_i, u_i\perp (U_1\cap U_2)\}.$$
\end{definition}

For a Hilbert space $\mathcal{H}$ and a closed subspace $U$,  we will denote $P_U$ to be the orthogonal projection on $U$.


The following result was proven in \cite{Kassabov}:
\begin{theorem}\cite[Theorem 5.1]{Kassabov}
\label{Kas Thm}
Let $\mathcal{H}$ be a Hilbert space and $U_0,...,U_n$ closed subspaces of $\mathcal{H}$.  Let $\mathcal{A} (U_0,...,U_n)$ to be the $(n +1) \times (n+1)$ matrix defined as 
$$\mathcal{A} (U_0,...,U_n) [i,j] = \begin{cases}
1 & i=j \\
- \cos \angle (U_i,U_j) &  i \neq j
\end{cases}.$$
If $\mathcal{A}$ is positive definite,  then for every $x \in \mathcal{H}$ it holds that 
$$\Vert x - P_{U_0 \cap ... \cap U_n} x \Vert^2 \leq  \textbf{d}_{U_0,...,U_n} \left( \mathcal{A} (U_0,...,U_n) \right)^{-1}  \textbf{d}_{U_0,...,U_n}^t,$$
where $\textbf{d}_{U_0,...,U_n}$ is the vector $\textbf{d}_{U_0,...,U_n} = \left(\Vert x - P_{U_0} x \Vert,  ... , \Vert x - P_{U_n} x \Vert \right)$.
\end{theorem}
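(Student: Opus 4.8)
The plan is to first reduce to the case $P_{U_0\cap\cdots\cap U_n}x=0$. Set $W=U_0\cap\cdots\cap U_n$ and replace $x$ by $x'=x-P_Wx$. Since $W\subseteq U_i$ we have $P_{U_i}P_W=P_W$, hence $x'-P_{U_i}x'=x-P_{U_i}x$, so the vector $\mathbf{d}_{U_0,\dots,U_n}$ is unchanged; and as $P_Wx'=0$ we also have $\Vert x'\Vert=\Vert x-P_Wx\Vert$. Thus it suffices to prove $\Vert x\Vert^2\le\mathbf{d}\mathcal{A}^{-1}\mathbf{d}^t$ under the assumption $x\perp W$.

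Next I would record the relevant vectors and the single geometric estimate that feeds into the matrix $\mathcal{A}$. Put $y_i=x-P_{U_i}x\in U_i^{\perp}$, so that $\Vert y_i\Vert=d_i$ and, since $P_{U_i}x\perp y_i$, one has $\langle x,y_i\rangle=\Vert y_i\Vert^2=d_i^2$. The content of the angle hypothesis is a lower bound on the off-diagonal inner products: I claim $\langle y_i,y_j\rangle\ge-\cos\angle(U_i,U_j)\,d_id_j$. To see this, use the standard invariance of the angle under orthogonal complementation, $\cos\angle(U_i^{\perp},U_j^{\perp})=\cos\angle(U_i,U_j)$, and decompose $y_i$ along $U_i^{\perp}\cap U_j^{\perp}=(U_i+U_j)^{\perp}$ and its orthocomplement inside $U_i^{\perp}$. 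Since $(U_i+U_j)^{\perp}\subseteq U_i^{\perp}$ and $P_{U_i}x\perp(U_i+U_j)^{\perp}$, the projections of $y_i$ and $y_j$ onto $(U_i+U_j)^{\perp}$ both equal $P_{(U_i+U_j)^{\perp}}x$, contributing a nonnegative term $\Vert P_{(U_i+U_j)^{\perp}}x\Vert^2$, while the remaining parts $r_i,r_j$ are orthogonal to $U_i^{\perp}\cap U_j^{\perp}$ and hence satisfy $\vert\langle r_i,r_j\rangle\vert\le\cos\angle(U_i,U_j)\Vert r_i\Vert\Vert r_j\Vert$ with $\Vert r_i\Vert\le d_i$. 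Consequently the Gram matrix $G$ of the normalized residuals $\hat y_i=y_i/d_i$ satisfies $G_{ij}\ge\mathcal{A}_{ij}$ for $i\neq j$, with equal diagonals.

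Finally I would turn $\Vert x\Vert^2$ into a statement about $G$ and compare with $\mathcal{A}$. Let $S=\overline{\Span\{y_0,\dots,y_n\}}$ and split $x=P_Sx+\Xi$ with $\Xi\perp y_i$ for all $i$. Because $\langle x,\hat y_i\rangle=d_i$, the coordinates of $P_Sx$ in the $\hat y_i$ solve $Gc=\mathbf{d}^t$, giving the exact identity $\Vert x\Vert^2=\mathbf{d}\,G^{-1}\mathbf{d}^t+\Vert\Xi\Vert^2$. Since $\mathcal{A}$ is positive definite with nonpositive off-diagonal entries it is a symmetric $M$-matrix, so $\mathcal{A}^{-1}$ is entrywise nonnegative and $b:=\mathcal{A}^{-1}\mathbf{d}^t\ge0$; this is the point at which the positive-definiteness hypothesis enters. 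The theorem then amounts to the inequality $\mathbf{d}G^{-1}\mathbf{d}^t+\Vert\Xi\Vert^2\le\mathbf{d}\mathcal{A}^{-1}\mathbf{d}^t$, and I expect this last step to be the main obstacle. The difficulty is genuine: $x$ need not lie in $S$ (already for $n=1$ one can have $\Xi\neq0$ in $\mathbb{R}^3$), so $\Vert x\Vert^2$ is not reconstructed from $\mathbf{d}$ and the pairwise angles alone, and the entrywise bound $G_{ij}\ge\mathcal{A}_{ij}$ does not yield a Loewner ordering $G\succeq\mathcal{A}$. The estimate must therefore exploit the nonnegativity of $b$ together with the positivity of $\mathcal{A}$ in the specific direction $\mathbf{d}$, quantitatively balancing the deficiency $\Vert\Xi\Vert^2$ against the gap between $G$ and $\mathcal{A}$ furnished by the angle inequality; this coupling of the sign structure of $\mathcal{A}^{-1}$ with the angle estimate is where the real work lies.
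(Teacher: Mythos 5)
The paper itself does not prove this statement at all; it is imported verbatim from \cite[Theorem 5.1]{Kassabov}, so your attempt has to be judged on its own terms. On those terms, the parts you carry out are correct: the reduction to $x\perp W$ (valid because $P_{U_i}P_W=P_W$), the lower bound $\langle \hat y_i,\hat y_j\rangle \geq -\cos\angle(U_i,U_j)$ obtained from the duality $\angle(U_i^{\perp},U_j^{\perp})=\angle(U_i,U_j)$ and the splitting of $y_i$ along $(U_i+U_j)^{\perp}$, the identity $\Vert x\Vert^2=\mathbf{d}G^{-1}\mathbf{d}^t+\Vert\Xi\Vert^2$ (modulo the unaddressed degenerate cases where some $d_i=0$ or the $\hat y_i$ are linearly dependent, where $G^{-1}$ must be replaced by a pseudo-inverse), and the entrywise nonnegativity of $\mathcal{A}^{-1}$ for a positive definite matrix with nonpositive off-diagonal entries.

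However, the argument stops exactly where the theorem begins. The inequality $\mathbf{d}G^{-1}\mathbf{d}^t+\Vert\Xi\Vert^2\leq\mathbf{d}\mathcal{A}^{-1}\mathbf{d}^t$, which you defer as ``where the real work lies,'' is not a technical remainder: it \emph{is} the statement, and the facts you have assembled provably cannot yield it. The data you retain --- $\Vert\hat y_i\Vert=1$, $\langle x,\hat y_i\rangle=d_i$, $G_{ij}\geq\mathcal{A}_{ij}$ off the diagonal, $b=\mathcal{A}^{-1}\mathbf{d}^t\geq 0$, and $\Xi\perp y_i$ --- do not determine the conclusion. Already for $n=0$ your final inequality reads $d_0^2+\Vert\Xi\Vert^2\leq d_0^2$, i.e.\ it forces $\Xi=0$; but nothing in the Gram-type relations forces this, since a vector $x=y_0+\Xi$ with arbitrary $\Xi\perp y_0$ satisfies all of them. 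What kills $\Xi$ there is the subspace geometry: $y_0=P_{U_0^{\perp}}x$ together with $x\perp W=U_0$ gives $y_0=x$. For general $n$ the same issue appears in earnest: $\Vert\Xi\Vert^2$ must be played off quantitatively against the excess $G_{ij}-\mathcal{A}_{ij}\geq\Vert P_{(U_i+U_j)^{\perp}}x\Vert^2/(d_id_j)$ --- precisely the term you computed and then discarded when passing to the lower bound --- and establishing that trade-off requires returning to the geometry of the subspaces; it cannot be extracted from the entrywise comparison of $G$ with $\mathcal{A}$ plus the sign of $b$, as you yourself observe when noting that entrywise domination gives no Loewner ordering. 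So what you have is a sound set of preliminary reductions together with an accurate diagnosis of the obstruction, but the theorem itself remains unproved; closing the gap needs a genuinely different mechanism (e.g.\ the inductive two-subspace scheme in Kassabov's paper) rather than further manipulation of $G$, $\mathcal{A}$, and $\Xi$.
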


\subsection{Random walks on bipatite finite graphs}
\label{rw on graph subsec}
Given a finite connected graph $(V,E)$,  we define $m : V \rightarrow \mathbb{N}$ to be the degree of $v$ for every $v \in V$.  For a non-empty subset $V' \subseteq V$,  we further define $m(V') = \sum_{v \in V'} m(v)$.   

The graph $(V,E)$ is called \textit{bipartite} if $V$ can be partitioned into two disjoint sets $V = S_1 \cup S_2$ called \textit{sides} such that for each $\lbrace u,v \rbrace \in E$, $\vert \lbrace u,v \rbrace \cap S_1 \vert =  \vert \lbrace u,v \rbrace \cap S_2 \vert =1$, i.e., each edge has exactly one vertex in each side.  We note that it follows that $m (S_1) = m(S_2) = \frac{1}{2} m(V)$.

We also define $\ell^2 (V,m)$ to be the space of functions $\phi: V \rightarrow \mathbb{C}$ with an inner-product
$$\langle \phi, \psi \rangle = \sum_{\lbrace v \rbrace \in V} = m(v) \phi (v) \overline{\psi (v)}.$$

The \textit{random walk} on $(V,E)$ as above is the operator $A: \ell^2 (V,m) \rightarrow \ell^2 (V,m)$ defined as  
$$(A \phi) (v) = \sum_{u \in V, \lbrace u, v \rbrace \in E} \frac{1}{m(v)} \phi (u).$$

We state without proof a few basic facts regarding the random walk operator:
\begin{enumerate}
\item With the inner-product defined above, $A$ is a self-adjoint operator and the eigenvalues of $A$ lie in the interval $[-1,1]$.
\item The space of constant functions is an eigenspace of $A$ with eigenvalue $1$ and if $(V,E)$ is connected all other the other eigenfunctions of $A$ have eigenvalues strictly less than $1$.
\item Assuming that the graph $(V,E)$ is connected it holds that this graph is bipartite if and only if $-1$ is an eigenvalue of $A$.
\end{enumerate}

Assuming that $(V,E)$ is bipartite with sides $S_1, S_2$,  we note that the spectrum of $A$ is symmetric: Explicitly, if $\lambda$ is an eigenvalue of $A$ with an eigenfunction $\phi$, then 
$$\phi ' (u) = 
\begin{cases}
\phi (u) & u \in S_1 \\
- \phi (u) & u \in S_2
\end{cases}$$
is an eigenfunction of $A$ with an eigenvalue $- \lambda$.  In particular $\phi = \mathbbm{1}_{S_1} - \mathbbm{1}_{S_2}$ is an eigenfunction with the eigenvalue $-1$.

We define the following averaging operators:
$M_{1}, M_{2} : \ell^2 (V, m) \rightarrow \mathbb{C}$: 
$$M_{i} \phi = \frac{1}{m (S_{i})} \sum_{u \in S_{i}} m(u) \phi (u).$$
We also define $M_{\sides} : \ell^2 (V, m) \rightarrow \ell^2 (V, m)$ by 
$$M_{\sides} \phi (u) = 
\begin{cases}
M_{1} \phi & u \in S_{1} \\
M_{2} \phi & u \in S_{2} 
\end{cases}.$$

We note that $M_{\sides}$ is the orthogonal projection  on the space of functions $\Span \lbrace \mathbbm{1}_V,   \mathbbm{1}_{S_i} - \mathbbm{1}_{S_j} \rbrace$.

We recall the following definition of spectral expansion:
\begin{definition}
Let $(V,E)$ be a finite connected graph and $0 \leq \lambda <1$ a constant. The graph $(V,E)$ is called a one-sided $\lambda$-spectral expander if the spectrum of $A$ is contained in $[-1, \lambda] \cup \lbrace 1 \rbrace$. 
\end{definition}
As noted above,  for a bipartite graph the spectrum of $A$ is symmetric with an eigenvalue $-1$ with an eigenfunction $\mathbbm{1}_{S_1} - \mathbbm{1}_{S_2} $.  Thus, for a connected bipartite graph $(V,E)$ it holds that the graph is a one-sided $\lambda$-spectral expander if and only if $\Vert A (I- M_{\sides}) \Vert \leq \lambda$. 

Given a Banach space $\B$, we define $\ell^2 (V, m ; \B)$ to be the Banach space of the functions $\phi : V \rightarrow \B$ with the norm
$$\Vert \phi \Vert^2 = \sum_{v \in V} m (v) \vert \phi (v) \vert_{\B}^2.$$
For a Hilbert space $\mathcal{H}$,  $\ell^2 (V, m ; \mathcal{H})$ is a Hilbert space with the inner-product
$$\langle \phi, \psi \rangle_{\ell^2 (V, m ; \mathcal{H})} = \sum_{v \in V} m (v) \langle \phi (v),  \psi (v) \rangle_{\mathcal{H}}.$$

Given a Banach space $\B$,  we define the operator $(A (I-M_{\sides})) \otimes \id_{\B} : \ell^2 (V,m ; \B) \rightarrow \ell^2 (V,m ; \B)$ and denote $\lambda_{(V,E),\bipartite}^{\B} = \Vert (A (I-M_{\sides})) \otimes \id_{\B} \Vert_{B(\ell^2 (V,m ; \B))}$.

\begin{proposition}
\label{rw eigenv bound prop}
Let $(V,E)$ be a finite connected bipartite graph and denote  $\lambda$ to be the second largest eigenvalue of the random walk $A: \ell^{2} (V,m) \rightarrow \ell^2 (V,m)$.  For every Hilbert space $\mathcal{H}$,  $\lambda_{(V,E),\bipartite}^{\mathcal{H}} \leq \lambda$.
\end{proposition}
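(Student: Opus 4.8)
The plan is to reduce the statement about the Hilbert-space-valued operator to the scalar spectral gap of $A$. The key observation is that for a separable Hilbert space $\mathcal{H}$ with orthonormal basis $\{e_k\}$, any function $\phi \in \ell^2(V,m;\mathcal{H})$ decomposes coordinate-wise as $\phi(v) = \sum_k \phi_k(v) e_k$ where each $\phi_k \in \ell^2(V,m)$ is a scalar-valued function, and this decomposition is orthogonal in the sense that $\Vert \phi \Vert^2 = \sum_k \Vert \phi_k \Vert^2_{\ell^2(V,m)}$. Since the operator $(A(I-M_{\sides})) \otimes \id_{\mathcal{H}}$ acts by applying $A(I-M_{\sides})$ to each scalar coordinate independently, I would first verify that $\left((A(I-M_{\sides})) \otimes \id_{\mathcal{H}}\right)\phi$ has $k$-th coordinate equal to $(A(I-M_{\sides}))\phi_k$.

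With this coordinate decomposition in hand, the norm computation becomes a Pythagorean summation:
\begin{align*}
\Vert ((A(I-M_{\sides})) \otimes \id_{\mathcal{H}}) \phi \Vert^2
&= \sum_k \Vert (A(I-M_{\sides})) \phi_k \Vert^2_{\ell^2(V,m)}.
\end{align*}
The next step is to bound each scalar term. By the discussion preceding the proposition, $A(I-M_{\sides})$ annihilates the span of $\mathbbm{1}_V$ and $\mathbbm{1}_{S_1}-\mathbbm{1}_{S_2}$ (the eigenspaces for eigenvalues $1$ and $-1$), and on the orthogonal complement of this span the operator $A$ has spectrum contained in $[-\lambda', \lambda]$ where $\lambda$ is the second largest eigenvalue. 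Since the spectrum of $A$ on a bipartite graph is symmetric, the operator norm of $A$ restricted to $(I-M_{\sides})\ell^2(V,m)$ is exactly $\lambda$, so $\Vert (A(I-M_{\sides}))\phi_k \Vert \leq \lambda \Vert \phi_k \Vert$ for each $k$.

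Combining these, I would conclude
\begin{align*}
\Vert ((A(I-M_{\sides})) \otimes \id_{\mathcal{H}}) \phi \Vert^2
\leq \sum_k \lambda^2 \Vert \phi_k \Vert^2
= \lambda^2 \Vert \phi \Vert^2,
\end{align*}
which gives $\lambda^{\mathcal{H}}_{(V,E),\bipartite} \leq \lambda$ as desired. For the non-separable case I would note that any single $\phi$ takes values in a separable (indeed finite-dimensional, since $V$ is finite) subspace of $\mathcal{H}$, so the separable argument suffices, or alternatively one works directly with an arbitrary orthonormal system spanning the closed span of the image of $\phi$.

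The main obstacle I anticipate is purely bookkeeping rather than conceptual: one must carefully justify that the tensor-product operator genuinely acts coordinate-wise and that the symmetrization of the spectrum legitimately identifies the operator norm of $A(I-M_{\sides})$ with $\lambda$ rather than merely bounding it by $\max(\lambda, 1)$. The symmetry of the spectrum is essential here, since it guarantees that removing the $-1$ eigenspace via $M_{\sides}$ leaves the largest-magnitude remaining eigenvalue equal to $\lambda$; without bipartiteness one would only control the top eigenvalue and not the bottom. Verifying that $M_{\sides}$ is precisely the projection onto $\Span\{\mathbbm{1}_V, \mathbbm{1}_{S_1}-\mathbbm{1}_{S_2}\}$ (as asserted in the preliminaries) closes this gap.
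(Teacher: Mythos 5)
Your proposal is correct and follows essentially the same route as the paper: reduce to scalar coordinates via an orthonormal decomposition (the paper does this by noting $\phi(V)$ spans a subspace of dimension at most $\vert V \vert$, so one may take $\mathcal{H} = \mathbb{C}^{\vert V \vert}$), observe that $(A(I-M_{\sides})) \otimes \id$ acts coordinate-wise, and bound each scalar term using the fact that $I - M_{\sides}$ projects onto $\left(\Span \lbrace \mathbbm{1}_V, \mathbbm{1}_{S_1} - \mathbbm{1}_{S_2} \rbrace\right)^{\perp}$, where the spectrum of $A$ lies in $[-\lambda,\lambda]$. If anything, your explicit appeal to the symmetry of the bipartite spectrum to control the negative eigenvalues is slightly more careful than the paper's phrasing, which states only that the eigenvalues there are $\leq \lambda$.
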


\begin{proof}
Denote $\lambda$ to be the second largest eigenvalue of $A$.  We will show that for every $\phi \in \ell^2 (V, m ; \mathcal{H})$ with $\Vert \phi \Vert=1$,  it holds that   
$$\Vert (A (I-M_{\sides})) \otimes \id_{\mathcal{H}}  \phi\Vert \leq \lambda.$$
Let $\phi \in \ell^2 (V, m ; \mathcal{H})$ of norm $1$.  Since $V$ is finite,  the image of $\phi$,  $\phi (V)$ is contained in a finite dimensional Hilbert space of dimension $\leq \vert V \vert$.  Thus,  we can assume that $\phi \in  \ell^2 (V,m ; \mathbb{C}^{\vert V \vert})$ and it is enough to show that $\lambda_{(V,E),\bipartite}^{\mathbb{C}^{\vert V \vert}} \leq \lambda$.   By our assumption we can write $\phi = ( \phi_i)_{1 \leq i \leq \vert V \vert}$.  We note that by the definition of the norm,  it holds that 
$$\Vert \phi \Vert_{\ell^2 (V,m ; \mathbb{C}^{\vert V \vert})}^2 = \sum_{i=1}^{\vert V \vert} \Vert \phi_i \Vert_{\ell^2 (V,m)}^2.$$
Also,  
$$A (I-M_{\sides})) \otimes \id_{\mathbb{C}^{\vert V \vert}} \phi = (A (I-M_{\sides})) \phi_i )_{1 \leq i \leq \vert V \vert}.$$
Thus,  it is enough to prove that for every $i$, 
$$\Vert A (I-M_{\sides})) \phi_i \Vert_{\ell^2 (V,m)} \leq \lambda \Vert \phi_i \Vert_{\ell^2 (V,m)}$$
and this inequality holds since $I-M_{\sides}$ is a projection on the space $\left( \Span \lbrace \mathbbm{1}_V,   \mathbbm{1}_{S_i} - \mathbbm{1}_{S_j} \rbrace\right)^{\perp}$ and on this space all the eigenvalues of $A$ are $\leq \lambda$.  
\end{proof}

\begin{remark}
In the notation of the Proposition above,  it is not hard to show that it holds that $\lambda_{(V,E),\bipartite}^{\mathcal{H}} = \lambda$ for every non-trivial Hilbert space $\mathcal{H}$, i.e.,  for $\mathcal{H} \neq \lbrace 0 \rbrace$.  This fact is left for the reader and we will make no use of it in the sequel. 
\end{remark}

\subsection{Partite simplicial complexes}

Given a set $V$, an abstract simplicial complex $X$ with a vertex set $V$ is a family of subsets $X \subseteq 2^V$ such that if $\tau \in X$ and $\eta \subseteq \tau$, then $\eta \in X$. We will denote $X(k)$ to be the sets in $X$ of cardinality $k+1$.  A simplicial complex $X$ is called \textit{$n$-dimensional} if $X(n+1) = \emptyset$ and $X(n) \neq \emptyset$.  An $n$-dimensional simplicial complex $X$ is called \textit{pure} $n$-dimensional if for every $\tau$ in $X$, there is $\sigma \in X(n)$ such that $\tau \subseteq \sigma$.  

A pure $n$-dimensional simplicial complex $X$ is called \textit{gallery connected} if for every $\sigma, \sigma ' \in X(n)$, there is a finite sequence $\sigma_0,...,\sigma_l \in X(n)$ such that $\sigma = \sigma_0, \sigma ' = \sigma_l$ and for every $1 \leq i \leq l-1$, $\sigma_i \cap \sigma_{i+1} \in X(n-1)$.  For a gallery $\sigma_0,...,\sigma_l$,  the number $l$ is called the \textit{length of the gallery} and the \textit{gallery distance between $\sigma,  \sigma '$} is the length of the shortest gallery connecting $\sigma$ and $\sigma '$.  Below, we will always assume that $X$ is pure $n$-dimensional and gallery connected.



Given a simplex $\tau \in X$, the \textit{link of $\tau$} is the subcomplex of $X$, denoted $X_\tau$, that is defined as
$$X_\tau = \lbrace \eta \in X : \tau \cap \eta = \emptyset, \tau \cup \eta \in X \rbrace.$$

Given a pure $n$-dimensional simplicial complex $X$, we call $X$ \textit{partite}  if there is a partition of the vertex set $V$,  $V_0 \sqcup ...  \sqcup V_n = V$ such that for every $\sigma \in X(n)$ and every $0 \leq i \leq n$ it holds that $\vert \sigma \cap V_i \vert =1$, i.e., every $n$-dimensional simplex has exactly one vertex in each of the sets $V_0,...,V_n$.  A partite simplicial complex is also sometimes called \textit{colorable}, since we can think of the partition $V_0,...,V_n$ as a coloring of the vertex sets with $n+1$ colors, such that each $n$-dimensional simplex has vertices with all the colors (or equivalently, each $n$-dimensional simplex do not have two vertices with the same color).  For an $n$-dimensional partite simplicial complex, we define a type function $\type : X \rightarrow 2^{\lbrace 0,..., n\rbrace}$ by 
$$\type (\tau) = \lbrace i : \exists v \in \tau \cap V_i \rbrace.$$
Last,  note that if $X$ is a pure $n$-dimensional partite simplicial complex, then all the $1$-dimensional links of $X$ are bipartite graphs.

\section{Property (T) criterion for a group acting on a partite complex}

Here we will establish a general criterion for property (T) for a group acting on a partite simplicial complex.

We start by recalling the framework and some results from \cite{OppZukB}.

Let $X$ be a partite pure $n$-dimensional simplicial complex such that $X$ is gallery connected and the $1$-dimensional links of $X$ are connected finite graphs.  Also let $G$ be a locally compact, unimodular group acting on $X$ such that the action is cocompact and for every $\tau \in X(n-2) \cup X(n-1) \cup X(n)$ the subgroup stabilizing $\tau$, denoted $G_\tau$, is an open compact subgroup.  We also assume that the action of $G$ is type-preserving, i.e., for every $0 \leq k \leq n$ and every $\tau \in X(k)$, it holds for every $g \in G$ that $\type (\tau) = \type (g.\tau)$.  Last, let $\pi$ be a continuous unitary representation $\pi$ of $G$ on a Hilbert space $\mathcal{H}$.

\begin{remark}
\label{type preserving rmrk}
The assumption that the action is type preserving is in order to avoid some technical issues.  We note that this assumption is not very restrictive since every group $G$ acting on a partite simplicial complex $X$ that is gallery connected  has a finite index subgroup that is type preserving.  Indeed,  let $X$ be partite, gallery connected simplicial complex and $G$ a group acting on $X$.  Let $\sigma = \lbrace v_0,...,v_n \rbrace,  \sigma ' = \lbrace w_0,...,w_n \rbrace \in X(n)$ such that $\type (v_i) = \type (w_i) = i$.  By induction on the gallery distance between $\sigma$ and $\sigma '$, one can show that for every $g \in G$,  $\type (g.v_i) = \type (g.w_i)$.  It follows that when fixing $\sigma =  \lbrace v_0,...,v_n \rbrace$ as above,  the map $\Phi : G \rightarrow \Sym \lbrace 0,...,n \rbrace$ defined as $\Phi (g) (i) = \type (g^{-1} .v_i)$ is a group homomorphism and its kernel is a finite index subgroup of $G$  that is type preserving.
\end{remark}


We define $C (X(n), \pi)$ to be the space of maps $\phi : X(n) \rightarrow \mathcal{H}$ that are equivariant with respect to $\pi$, i.e., for every $\sigma \in X(n)$ and every $g \in G$, $\pi (g) \phi (\sigma) = \phi (g. \sigma)$.  We further define a norm on $C (X(n), \pi)$ by fixing a fundamental domain $D(n)$ for the action of $G$ on $X(n)$ and defining
$$\Vert \phi \Vert^2 = \left(  \sum_{\sigma \in D(n)} \frac{1}{\mu (G_\sigma )} \right)^{-1} \sum_{\sigma \in D(n)} \frac{1}{\mu (G_\sigma )} \vert \phi (\sigma) \vert^2,$$
where $G_\sigma$ is the subgroup stabilizing $\sigma$,  $\mu$ is the Haar measure of $G$ and $\vert . \vert$ is the norm of $\mathcal{H}$.   

For $\emptyset \neq \nu \subseteq \lbrace 0,....,n \rbrace$ define an equivalence relation $\sim_\nu$ on $X(n)$ as $\sigma \sim_\nu \sigma '$ if $\vert \sigma \cap \sigma ' \vert \geq \vert \nu \vert$ and there is $\tau \in X$ such that $\type (\tau) = \nu$ and $\tau \subseteq \sigma \cap \sigma ' $.  Note that by the assumption that the $1$-dimensional links of $X$ are finite graphs, it follows that for every $\emptyset \neq \nu \subseteq \lbrace 0, ...,n\rbrace$, $\vert \nu \vert = n$ and every $\sigma \in X(n)$,  the set $\lbrace \sigma ' \in X(n) : \sigma \sim_\nu \sigma ' \rbrace$ is finite. For $\emptyset \neq \nu \subseteq \lbrace 0, ...,n\rbrace$, $\vert \nu \vert = n$,  define a projection $P_\nu^\pi : C (X(n), \pi) \rightarrow C (X(n), \pi)$ by 
$$P_\nu^\pi \phi (\sigma) = \frac{1}{\vert \lbrace \sigma ' \in X(n) : \sigma \sim_\nu \sigma ' \rbrace \vert} \sum_{\sigma ' \sim_\nu \sigma} \phi (\sigma '),$$
(verifying that $P_\nu^\pi \phi$ is equivariant with respect to $\pi$ is straight-forward and left for the reader).  Denote 
$$C (X(n), \pi)_\nu = \lbrace \phi \in C( X(n), \pi) : \forall \sigma, \sigma ', \sigma \sim_\nu \sigma ' \Rightarrow \phi (\sigma ) = \phi (\sigma ') \rbrace,$$
and note that $P_\nu^\pi$ is a projection on $ C (X(n), \pi)_\nu$.

\begin{lemma}\cite[Lemma 3.5]{OppZukB}
\label{intersect lemma}
Let $X$, $G$ and $\pi$ be as above.  The space $\bigcap_{\nu \subseteq \lbrace 0,....,n \rbrace,  \vert \nu \vert = n} C (X(n), \pi)_\nu$ is the space of constant functions $\phi \equiv x_0$ where $x_0 \in \mathcal{H}^{\pi (G)}$.
\end{lemma}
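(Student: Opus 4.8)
The plan is to reduce the statement to two combinatorial facts about $X$: that for $|\nu| = n$ the relation $\sim_\nu$ means exactly ``share a codimension-one face of type $\nu$,'' and that ranging over all such $\nu$ recovers the gallery-adjacency relation on $X(n)$. Once this is established, a function lying in every $C(X(n),\pi)_\nu$ is forced to be constant by gallery connectivity, and the fixed-vector condition then falls out of equivariance.

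First I would unwind $\sim_\nu$ for $|\nu| = n$. If $\sigma, \sigma' \in X(n)$ are distinct and $\sigma \sim_\nu \sigma'$, then $|\sigma \cap \sigma'| \geq |\nu| = n$ together with $|\sigma| = |\sigma'| = n+1$ forces $|\sigma \cap \sigma'| = n$, so $\tau := \sigma \cap \sigma' \in X(n-1)$ is a common codimension-one face, and the witness of type $\nu$ contained in $\sigma \cap \sigma'$ must equal $\tau$ (both have cardinality $n$), whence $\type(\tau) = \nu$. Because $X$ is partite, each $\sigma \in X(n)$ contains exactly one codimension-one face of each type $\nu$, namely $\sigma$ with its unique vertex of the omitted color removed; this is the point where colorability is used, and it makes the $\sim_\nu$-classes precisely the sets of chambers sharing one fixed codimension-one face of type $\nu$. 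Since $\nu$ is determined by the single color it omits and ranges over all $n$-element subsets of $\lbrace 0,\ldots,n\rbrace$, the union $\bigcup_\nu \sim_\nu$ is exactly the gallery-adjacency relation on $X(n)$ (together with the diagonal).

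With this in hand, let $\phi \in \bigcap_{\nu,\, |\nu|=n} C(X(n),\pi)_\nu$ and let $\sigma, \sigma' \in X(n)$ be gallery adjacent, so $\sigma \cap \sigma' \in X(n-1)$; taking $\nu = \type(\sigma \cap \sigma')$ gives $\sigma \sim_\nu \sigma'$, and membership in $C(X(n),\pi)_\nu$ yields $\phi(\sigma) = \phi(\sigma')$. As $X$ is gallery connected, any two chambers are joined by a chain of gallery-adjacent chambers, so $\phi \equiv x_0$ is constant with value some $x_0 \in \mathcal{H}$. Equivariance then reads $\pi(g) x_0 = \pi(g)\phi(\sigma) = \phi(g.\sigma) = x_0$ for every $g \in G$, i.e. $x_0 \in \mathcal{H}^{\pi (G)}$. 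Conversely, for $x_0 \in \mathcal{H}^{\pi (G)}$ the constant map $\phi \equiv x_0$ is equivariant and trivially constant on every $\sim_\nu$-class, so it lies in the intersection; this gives both inclusions. I expect the only delicate point to be the first step --- verifying the uniqueness of the type-$\nu$ face inside a chamber and that the relations $\sim_\nu$ assemble into gallery adjacency --- after which gallery connectivity and the definition of equivariance finish the argument routinely.
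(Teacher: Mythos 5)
Your proof is correct. Note that the paper does not actually prove this lemma --- it is imported verbatim from \cite[Lemma 3.5]{OppZukB} --- so there is no in-paper argument to compare against; your route (identifying $\sim_\nu$ for $\vert \nu \vert = n$ with ``sharing the unique codimension-one face of type $\nu$,'' assembling these relations into gallery adjacency, then invoking gallery connectivity for constancy and equivariance for $\pi(G)$-invariance of the constant, plus the easy converse inclusion) is exactly the natural argument one would expect the cited proof to follow. The only implicit step worth flagging is that a simplex of type $\nu$ necessarily has $\vert \nu \vert$ vertices, which uses that $X$ is pure and partite so that vertices of any simplex carry distinct colors; you use this when forcing the witness $\tau$ to equal $\sigma \cap \sigma'$, and it is immediate in this setting.
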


We note that for every $\tau \in X(n-2)$, $X_\tau$ is a bipartite graph and we denote $\lambda_{\tau,   \bipartite}^{\mathcal{H}} = \lambda_{X_\tau,   \bipartite}^{\mathcal{H}}$. 
\begin{lemma}\cite[Lemma 3.8]{OppZukB}
Let $X$, $G$ and $\pi$ be as above.  For every $\nu, \nu ' \subseteq  \lbrace 0,....,n \rbrace,  \vert \nu \vert = \vert \nu ' \vert= n,  \nu \neq \nu '$,  denote
$$ \lambda_{\nu \cap \nu '}^{\mathcal{H}} =  \max_{\tau \in X(n-2)/G,   \type (\tau) = \nu \cap \nu '} \lambda_{\tau,   \bipartite}^{\mathcal{H}}.$$
Then it holds that 
$$\cos \angle (C (X(n), \pi)_\nu ,  C (X(n), \pi)_{\nu '}  ) \leq   \lambda_{\nu \cap \nu '}^{\mathcal{H}}.$$ 
\end{lemma}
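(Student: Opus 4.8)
The plan is to reduce the global angle estimate to a purely local spectral estimate inside each bipartite link $X_\tau$, $\tau \in X(n-2)$ of type $\nu \cap \nu'$, and then to reassemble the local bounds by Cauchy--Schwarz. First I fix the local picture. Since $\vert \nu \vert = \vert \nu' \vert = n$ and $\nu \neq \nu'$, the sets $\nu, \nu'$ omit distinct colors $i \neq j$, so $\nu \cap \nu'$ omits both and $\vert \nu \cap \nu' \vert = n-1$; hence a simplex $\tau$ with $\type(\tau) = \nu \cap \nu'$ lies in $X(n-2)$ and its link $X_\tau$ is the bipartite graph whose sides $S_i, S_j$ are the color-$i$ and color-$j$ vertices, with an edge $\lbrace a,b \rbrace$ for each $\sigma = \tau \cup \lbrace a,b \rbrace \in X(n)$. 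Every $\sigma \in X(n)$ contains a unique face of type $\nu\cap\nu'$, so the fundamental-domain norm and inner product on $C(X(n),\pi)$ localize as
$$\langle \phi, \psi\rangle = \sum_{\tau} w_\tau \, \langle \phi_\tau, \psi_\tau\rangle_{\tau}, \qquad \Vert \phi \Vert^2 = \sum_\tau w_\tau \Vert \phi_\tau \Vert_\tau^2,$$
where $\tau$ runs over the representatives in $X(n-2)/G$ of type $\nu\cap\nu'$, $\phi_\tau$ is the restriction $\sigma \mapsto \phi(\sigma)$ to $\lbrace \sigma : \tau \subseteq \sigma \rbrace$, and $\langle \cdot,\cdot\rangle_\tau$ is the inner product of $\ell^2(X_\tau, m; \mathcal{H})$ under the identification of $\sigma$ with its edge. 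Under this identification $\phi \in C(X(n),\pi)_\nu$ says exactly that $\phi_\tau$ depends only on the color-$j$ endpoint of its edge, i.e.\ $\phi_\tau$ is the pullback of a function $g_\tau : S_j \to \mathcal{H}$, while $\psi \in C(X(n),\pi)_{\nu'}$ means $\psi_\tau$ is the pullback of some $k_\tau : S_i \to \mathcal{H}$.

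Next I establish the local bound: in $\ell^2(X_\tau, m; \mathcal{H})$ the angle between the subspace $U_1$ of pullbacks from $S_j$ and the subspace $U_2$ of pullbacks from $S_i$ satisfies $\cos\angle(U_1,U_2) \le \lambda_{\tau,\bipartite}^{\mathcal{H}}$. Indeed $U_1 \cap U_2$ is the space of constant edge-functions, so a unit $f \in U_1$ with $f \perp U_1\cap U_2$ is the pullback of some $g : S_j \to \mathcal{H}$ of vanishing $m$-average, and similarly $h \in U_2$ is the pullback of some $k$ on $S_i$. Extending $g$ by $0$ on $S_i$ to $\tilde g \in \ell^2(V,m;\mathcal{H})$ and $k$ by $0$ on $S_j$ to $\tilde k$, the vanishing of the $S_j$-average gives $M_{\sides}\tilde g = 0$, hence $(I - M_{\sides})\tilde g = \tilde g$; a direct computation then shows $\langle f, h\rangle_\tau = \langle (A \otimes \id_{\mathcal{H}})\tilde g,\, \tilde k\rangle = \langle ((A(I-M_{\sides}))\otimes \id_{\mathcal{H}})\tilde g,\, \tilde k\rangle$, whence
$$\vert \langle f, h\rangle_\tau \vert \le \Vert (A(I-M_{\sides}))\otimes \id_{\mathcal{H}} \Vert \, \Vert \tilde g \Vert \, \Vert \tilde k \Vert = \lambda_{\tau,\bipartite}^{\mathcal{H}} \, \Vert f \Vert_\tau \Vert h \Vert_\tau,$$
which is the claimed local estimate.

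Finally I assemble. Let $\phi \in C(X(n),\pi)_\nu$ and $\psi \in C(X(n),\pi)_{\nu'}$ be unit vectors orthogonal to $C(X(n),\pi)_\nu \cap C(X(n),\pi)_{\nu'}$; to apply the local bound I must check that each $\phi_\tau$ and $\psi_\tau$ is locally orthogonal to the constants. Let $\bar\phi_\tau \in \mathcal{H}$ be the $m$-average of $g_\tau$ and set $\Phi(\sigma) := \bar\phi_{\tau(\sigma)}$, where $\tau(\sigma)$ is the type-$(\nu\cap\nu')$ face of $\sigma$. Since $X_\tau$ is connected, $\sim_\nu$ and $\sim_{\nu'}$ together identify all $\sigma \supseteq \tau$, so $\Phi$ is constant on these classes; by equivariance of $\phi$ it is equivariant, hence $\Phi \in C(X(n),\pi)_\nu \cap C(X(n),\pi)_{\nu'}$. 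As $\Phi_\tau$ is the constant $\bar\phi_\tau$ and $\bar\phi_\tau$ is the $m$-average of $\phi_\tau$, one has $\langle \phi_\tau, \Phi_\tau\rangle_\tau = \Vert \Phi_\tau \Vert_\tau^2$, so the localization formula gives $\langle \phi, \Phi\rangle = \Vert \Phi \Vert^2$, which vanishes by hypothesis and forces $\bar\phi_\tau = 0$ for every $\tau$ (and likewise $\bar\psi_\tau = 0$). Thus the local bound applies for each $\tau$, and with $\Lambda := \lambda_{\nu\cap\nu'}^{\mathcal{H}}$,
$$\vert \langle\phi,\psi\rangle \vert \le \sum_\tau w_\tau \vert \langle\phi_\tau,\psi_\tau\rangle_\tau \vert \le \Lambda \sum_\tau w_\tau \Vert\phi_\tau\Vert_\tau \Vert\psi_\tau\Vert_\tau \le \Lambda \Big(\sum_\tau w_\tau\Vert\phi_\tau\Vert_\tau^2\Big)^{1/2}\Big(\sum_\tau w_\tau\Vert\psi_\tau\Vert_\tau^2\Big)^{1/2} = \Lambda,$$
by Cauchy--Schwarz and $\Vert\phi\Vert = \Vert\psi\Vert = 1$. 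Taking the supremum gives $\cos\angle(C(X(n),\pi)_\nu, C(X(n),\pi)_{\nu'}) \le \lambda_{\nu\cap\nu'}^{\mathcal{H}}$.

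I expect the main obstacle to be the bookkeeping behind the localization formula of the first paragraph: one must verify that reorganizing the fundamental-domain sum over $X(n)$ according to type-$(\nu\cap\nu')$ faces yields precisely the degree-weighted $\ell^2(X_\tau, m; \mathcal{H})$ structure that defines $\lambda_{\tau,\bipartite}^{\mathcal{H}}$, with a single family of weights $w_\tau$ serving both the norm and the inner product. This is where the orbit--stabilizer relations between $\mu(G_\sigma)$ and $\mu(G_\tau)$, together with cocompactness and the compact-open-stabilizer hypotheses, enter, and matching these weights to the random-walk normalization of \S\ref{rw on graph subsec} is the only genuinely delicate point; the remaining steps are the short computations above.
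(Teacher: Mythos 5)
Your proof is correct: the localization of the fundamental-domain inner product over the type-$(\nu \cap \nu')$ faces (with a single weight family $w_\tau = c/\mu(G_\tau)$, which indeed follows from orbit--stabilizer relations for the compact open stabilizers together with unimodularity, exactly the bookkeeping you flag), the local estimate via $\langle f,h\rangle_\tau = \langle ((A(I-M_{\sides}))\otimes \id_{\mathcal H})\tilde g, \tilde k\rangle$, and the averaging construction of $\Phi$ forcing local mean zero all check out, with the degenerate case $C(X(n),\pi)_\nu \subseteq C(X(n),\pi)_{\nu'}$ being trivial since the cosine is then $0 \leq \lambda_{\nu\cap\nu'}^{\mathcal H}$. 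Note that this paper does not prove the lemma at all --- it is imported as \cite[Lemma 3.8]{OppZukB} --- and your argument is the standard local-to-global route that the cited proof takes, so there is nothing in the present text to contrast it with.
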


We note that for every $\eta \in X(n-2)$,  $X_\eta$ is a connected bipartite graph and define $\lambda_\eta$ to be the second largest eigenvalue of the random walk on $X_\eta$.  For $0 \leq i,j \leq n,  i \neq j$,  we define
$$\lambda_{i,j} = \max_{\eta \in X(n-2) / G,  \type (\eta) = \lbrace 0,..., n \rbrace \setminus \lbrace i,j \rbrace} \lambda_{\eta}$$
and further define $\mathcal{A} (X)$ to be the $(n+1) \times (n+1)$ matrix 
$$\mathcal{A} (X) [i,j] = \begin{cases}
1 & i = j \\
- \lambda_{i,j} & i \neq j
\end{cases}.$$

\begin{corollary}
\label{lambda X coro}
Let $X$, $G$ and $\pi$ be as above.  Denote $U_i = C (X(n), \pi)_{\lbrace 0,...,n \rbrace \setminus \lbrace i \rbrace}$ and $\mathcal{A} (\pi) = \mathcal{A} (U_0,...,U_n)$ to be the cosine matrix as in Theorem \ref{Kas Thm},  i.e., 
$$\mathcal{A} (\pi) [i,j] =
\begin{cases}
1 & i = j \\
- \cos \angle (U_i, U_j) & i \neq j
\end{cases}.$$
Then for every $i,j$,  $\mathcal{A} (\pi) [i,j] \geq \mathcal{A} (X) [i,j]$.  

In particular,   if $\mathcal{A} (X)$ is positive definite and its smallest eigenvalue is $\lambda_X$,  then $\mathcal{A} (\pi)$ is positive definite and its smallest eigenvalue is $\geq \lambda_X$. 
\end{corollary}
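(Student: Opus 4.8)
The plan is to prove the entrywise inequality $\mathcal{A}(\pi)[i,j] \geq \mathcal{A}(X)[i,j]$ directly by concatenating the two preceding results, and then to deduce the spectral comparison from the special structure of both matrices.

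For the entrywise inequality, the diagonal entries of both matrices equal $1$, so equality holds there and only the off-diagonal case $i \neq j$ remains. Here I would set $\nu = \{0,\dots,n\} \setminus \{i\}$ and $\nu' = \{0,\dots,n\} \setminus \{j\}$, so that $U_i = C(X(n),\pi)_\nu$, $U_j = C(X(n),\pi)_{\nu'}$, and $\nu \cap \nu' = \{0,\dots,n\} \setminus \{i,j\}$ (of cardinality $n-1$, i.e.\ the type of a simplex in $X(n-2)$). The preceding lemma (\cite[Lemma 3.8]{OppZukB}) then gives
$$\cos\angle(U_i, U_j) \leq \lambda_{\nu\cap\nu'}^{\mathcal{H}} = \max_{\tau \in X(n-2)/G,\ \type(\tau) = \{0,\dots,n\}\setminus\{i,j\}} \lambda_{\tau,\bipartite}^{\mathcal{H}}.$$
Applying Proposition \ref{rw eigenv bound prop} to each bipartite link $X_\tau$ bounds $\lambda_{\tau,\bipartite}^{\mathcal{H}} = \lambda_{X_\tau,\bipartite}^{\mathcal{H}}$ by the second largest eigenvalue $\lambda_\tau$ of the random walk on $X_\tau$; taking the maximum over the relevant types of $\tau$ yields $\cos\angle(U_i,U_j) \leq \lambda_{i,j}$. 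Negating gives $\mathcal{A}(\pi)[i,j] = -\cos\angle(U_i,U_j) \geq -\lambda_{i,j} = \mathcal{A}(X)[i,j]$, as required.

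For the spectral statement, the key observation is that both matrices are of the form $I - B$ with $B$ symmetric, entrywise nonnegative, and zero on the diagonal: write $\mathcal{A}(\pi) = I - B_\pi$ with $B_\pi[i,j] = \cos\angle(U_i,U_j)$ and $\mathcal{A}(X) = I - B_X$ with $B_X[i,j] = \lambda_{i,j}$ off the diagonal. The entrywise inequality just proved says exactly $0 \leq B_\pi \leq B_X$ entrywise. For symmetric entrywise-nonnegative matrices the largest eigenvalue is monotone under entrywise domination: choosing a nonnegative unit eigenvector $v$ for $\lambda_{\max}(B_\pi)$ (which exists by Perron--Frobenius), one has $\lambda_{\max}(B_X) \geq v^t B_X v \geq v^t B_\pi v = \lambda_{\max}(B_\pi)$, since every term $v_i v_j$ is nonnegative and $B_X[i,j] \geq B_\pi[i,j]$. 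Since the smallest eigenvalue of $I - B$ is $1 - \lambda_{\max}(B)$, positive definiteness of $\mathcal{A}(X)$ means $\lambda_{\max}(B_X) = 1 - \lambda_X < 1$, and therefore the smallest eigenvalue of $\mathcal{A}(\pi)$ equals $1 - \lambda_{\max}(B_\pi) \geq 1 - \lambda_{\max}(B_X) = \lambda_X > 0$. Hence $\mathcal{A}(\pi)$ is positive definite with smallest eigenvalue at least $\lambda_X$.

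The routine part is the entrywise inequality, a direct chaining of the cited lemma with Proposition \ref{rw eigenv bound prop}. The step requiring care is the spectral comparison: entrywise domination of off-diagonal entries does \emph{not} in general control the eigenvalues of symmetric matrices, and the argument succeeds here only because the off-diagonal entries are nonpositive, so both matrices differ from the identity by a symmetric nonnegative matrix, placing the comparison squarely within the scope of Perron--Frobenius monotonicity.
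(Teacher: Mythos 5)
Your proof is correct, and its first half is exactly the paper's argument: the diagonal entries of both matrices are $1$, and for $i \neq j$ you chain \cite[Lemma 3.8]{OppZukB} with Proposition \ref{rw eigenv bound prop} to get $\cos \angle (U_i, U_j) \leq \lambda_{i,j}$, hence the entrywise inequality. Where you genuinely diverge is the spectral clause: the paper disposes of it with a one-line citation of \cite[Proposition 2.8]{GRO}, whereas you prove the needed eigenvalue monotonicity from scratch. Your Perron--Frobenius argument is sound: $B_\pi$ is symmetric with nonnegative entries (the cosine of an angle between subspaces is a supremum of absolute values of inner products, hence nonnegative by definition), so its largest eigenvalue equals its spectral radius and admits a nonnegative eigenvector $v$; then $\lambda_{\max}(B_X) \geq v^t B_X v \geq v^t B_\pi v = \lambda_{\max}(B_\pi)$, and passing from $B$ to $I-B$ converts this into the desired comparison of smallest eigenvalues. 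You also correctly flag the point that makes this work: entrywise domination alone controls nothing for general symmetric matrices, and it is the sign pattern of $\mathcal{A}$ (nonpositive off-diagonal entries) that puts the problem in Perron--Frobenius territory. One small simplification is available: the full Perron--Frobenius theorem is not needed, since for an entrywise nonnegative symmetric $B_\pi$ any unit Rayleigh maximizer $u$ can be replaced by $\left( \vert u_0 \vert, \dots, \vert u_n \vert \right)$ without decreasing $u^t B_\pi u$, which produces the required nonnegative test vector directly. In short, your route makes the corollary self-contained where the paper outsources the linear-algebra step, at the cost of a somewhat longer proof.
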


\begin{proof}
By the above Lemma and Proposition \ref{rw eigenv bound prop} it follows for every $i \neq j$,  that $\mathcal{A} (\pi) [i,j] \geq \mathcal{A} (X) [i,j]$ as needed. The fact that the smallest eigenvalue of $\mathcal{A} (\pi)$ is $\geq \lambda_X$ follows from \cite[Proposition 2.8]{GRO}.
\end{proof}

For $x \in \mathcal{H}$, define $\phi_x : D(n) \rightarrow \mathcal{H}$ by 
$$\phi_x (\sigma) =\pi \left( \frac{ \mathbbm{1}_{G_{\sigma}}}{\mu (G_\sigma )} \right) x,  \forall \sigma \in D(n),$$
where $\mathbbm{1}_{G_{\sigma}}$ is the indicator function of $G_\sigma$.  Observe that $\phi_x (\sigma) \in \mathcal{H}^{\pi (G_\sigma)}$ for every $\sigma \in D(n)$.  Extend $\phi_x$ to $X(n)$ as follows: for every $g \in G$ and every $\sigma \in D(n)$, define 
$$\phi_x (g.\sigma) = \pi (g) \phi_x (\sigma) =   \pi \left( \frac{ \mathbbm{1}_{g G_{\sigma}}}{\mu (g G_\sigma )} \right) x =  \pi \left( \frac{ \mathbbm{1}_{g  G_{\sigma}}}{\mu (G_\sigma )} \right) x.$$

\begin{proposition} \cite[Proposition 3.3]{OppZukB}
The map $\phi_x$ is well-defined and equivariant, i.e., $\phi_x \in C(X(n), \pi)$.
\end{proposition}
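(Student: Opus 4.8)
The plan is to isolate the one nontrivial ingredient---that each value $\phi_x(\sigma)$ on the fundamental domain is fixed by $\pi(G_\sigma)$---and then obtain both well-definedness and equivariance as formal consequences. First I would note that $\phi_x(\sigma)$ is a legitimate vector of $\mathcal{H}$: since $G_\sigma$ is compact and $\pi$ is continuous, the map $g \mapsto \pi(g)x$ is continuous and bounded on $G_\sigma$, and $\mu(G_\sigma) < \infty$, so $\phi_x(\sigma) = \frac{1}{\mu(G_\sigma)}\int_{G_\sigma}\pi(g)x\,d\mu(g)$ is a well-defined Bochner integral.

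The heart of the matter is to verify the stated observation $\phi_x(\sigma) \in \mathcal{H}^{\pi(G_\sigma)}$. For $h \in G_\sigma$, since $\pi(h)$ is bounded and linear it passes inside the integral, giving $\pi(h)\phi_x(\sigma) = \frac{1}{\mu(G_\sigma)}\int_{G_\sigma}\pi(hg)x\,d\mu(g)$. The substitution $u = hg$, together with the left-invariance of $\mu$ (recall $G$ is unimodular, so $\mu$ is bi-invariant) and the identity $hG_\sigma = G_\sigma$, turns this integral back into $\frac{1}{\mu(G_\sigma)}\int_{G_\sigma}\pi(u)x\,d\mu(u) = \phi_x(\sigma)$. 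Hence $\pi(h)\phi_x(\sigma) = \phi_x(\sigma)$ for every $h \in G_\sigma$, which is precisely the invariance claimed just before the Proposition.

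From here the remaining assertions are bookkeeping. For well-definedness, every $\tau \in X(n)$ is of the form $g.\sigma$ with $\sigma \in D(n)$ uniquely determined (as $D(n)$ meets each $G$-orbit exactly once) and with $g$ determined only up to right multiplication by $G_\sigma$; if $g.\sigma = g'.\sigma$ then $g^{-1}g' \in G_\sigma$, so by the invariance just proved $\pi(g')\phi_x(\sigma) = \pi(g)\pi(g^{-1}g')\phi_x(\sigma) = \pi(g)\phi_x(\sigma)$, and the value $\phi_x(\tau)$ is independent of the chosen $g$. Equivariance is then immediate: for $\tau = g.\sigma$ and any $g' \in G$ we have $g'.\tau = (g'g).\sigma$, whence $\phi_x(g'.\tau) = \pi(g'g)\phi_x(\sigma) = \pi(g')\phi_x(\tau)$, so $\phi_x \in C(X(n),\pi)$. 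The only substantive step is the $G_\sigma$-invariance of $\phi_x(\sigma)$ in the second paragraph; I expect everything else to require only care with the choice of fundamental-domain representatives and the cocycle identity $\pi(g'g) = \pi(g')\pi(g)$.
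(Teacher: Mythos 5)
Your proof is correct, and it is exactly the standard argument: the paper itself does not prove this proposition but cites it from \cite{OppZukB}, and the key step you isolate (that $\phi_x(\sigma) \in \mathcal{H}^{\pi(G_\sigma)}$ via averaging over the compact open stabilizer and invariance of the Haar measure) is precisely the observation the paper records just before stating the proposition, with well-definedness and equivariance following formally as you describe. One minor remark: only left-invariance of $\mu$ is needed for the substitution $u = hg$, so the appeal to unimodularity is superfluous at that point.
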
 

After this set-up, we are ready to state and prove our criterion for property (T):

\begin{theorem}
\label{criterion thm}
Let $X$ be a partite pure $n$-dimensional simplicial complex such that $X$ is gallery connected and the $1$-dimensional links of $X$ are connected finite graphs.  Also let $G$ be a locally compact, unimodular group acting on $X$ such that the action is cocompact and for every $\tau \in X(n-2) \cup X(n-1) \cup X(n)$ the subgroup stabilizing $\tau$, denoted $G_\tau$, is an open compact subgroup.  Assume that $\mathcal{A} (X)$ is positive definite,  then $G$ has property (T).
\end{theorem}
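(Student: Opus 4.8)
\emph{Plan of proof.} The plan is to exhibit a Kazhdan pair for $G$, that is, a compact set $Q\subseteq G$ and a constant $\kappa>0$ such that for every continuous unitary representation $\pi$ of $G$ on $\mathcal{H}$ with $\mathcal{H}^{\pi(G)}=\{0\}$ and every unit vector $x\in\mathcal{H}$ one has $\max_{g\in Q}\Vert\pi(g)x-x\Vert\geq\kappa$; producing such a pair with $\kappa$ independent of $\pi$ is exactly property (T). First I would reduce to the type-preserving case. By Remark \ref{type preserving rmrk} the type-preserving elements form the kernel $G_0$ of the homomorphism $\Phi$, which is continuous since the stabilizers are open, so $G_0$ is an open finite-index subgroup; it still acts on $X$ satisfying all the hypotheses of \S3. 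Since property (T) passes between a locally compact group and its open finite-index subgroups, it suffices to treat $G_0$, so from now on I assume the action is type-preserving and the machinery of \S3, in particular Lemma \ref{intersect lemma} and Corollary \ref{lambda X coro}, is available.

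Next I would extract the Poincaré inequality. Fix $\pi$ with no nonzero invariant vectors and work in the Hilbert space $C(X(n),\pi)$ with the subspaces $U_i=C(X(n),\pi)_{\lbrace 0,\dots,n\rbrace\setminus\lbrace i\rbrace}$, $i=0,\dots,n$. These are precisely the spaces $C(X(n),\pi)_\nu$ with $\vert\nu\vert=n$, so Lemma \ref{intersect lemma} gives $\bigcap_{i=0}^{n}U_i=\lbrace 0\rbrace$ because $\mathcal{H}^{\pi(G)}=\lbrace 0\rbrace$. As $\mathcal{A}(X)$ is positive definite with smallest eigenvalue $\lambda_X>0$, Corollary \ref{lambda X coro} makes $\mathcal{A}(\pi)=\mathcal{A}(U_0,\dots,U_n)$ positive definite with smallest eigenvalue $\geq\lambda_X$, whence $\Vert\mathcal{A}(\pi)^{-1}\Vert\leq\lambda_X^{-1}$. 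Applying Theorem \ref{Kas Thm} and using $P_{\bigcap_i U_i}=0$ yields, for every $\phi\in C(X(n),\pi)$,
$$\Vert\phi\Vert^2\ \leq\ \mathbf{d}\,\mathcal{A}(\pi)^{-1}\mathbf{d}^{t}\ \leq\ \frac{1}{\lambda_X}\sum_{i=0}^{n}\Vert\phi-P_{U_i}\phi\Vert^2,$$
where $\mathbf{d}=\left(\Vert\phi-P_{U_0}\phi\Vert,\dots,\Vert\phi-P_{U_n}\phi\Vert\right)$. This spectral gap is the engine of the argument.

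I would then apply the inequality to $\phi=\phi_x$ for a unit vector $x$ and translate both sides into displacement of $x$ over a compact set. Let $D(n)$ be a fundamental domain, finite by cocompactness; for $\sigma\in D(n)$ one has $\phi_x(\sigma)=P_{\mathcal{H}^{\pi(G_\sigma)}}x$. Because the action is type-preserving, every element of $G_\sigma$ fixes $\sigma$ pointwise, so $\tau\subseteq\sigma$ forces $G_\sigma\subseteq G_\tau$; in particular, for a codimension-one face $\tau$ of $\sigma$ and any chamber $\sigma'\supseteq\tau$ (finitely many, since $X_\tau$ is finite) both $G_\sigma$ and $G_{\sigma'}$ lie in the compact group $G_\tau$. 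I would let $Q$ be the union of the stabilizers $G_\sigma,G_\tau$ attached to the simplices of $D(n)$ and their faces together with the finitely many elements $g$ realizing the neighbouring chambers as $\sigma'=g.\sigma''$. If $x$ is $\varepsilon$-almost $Q$-invariant, then on the one hand $\Vert\phi_x(\sigma)-x\Vert=O(\varepsilon)$ for each $\sigma\in D(n)$, so $\Vert\phi_x\Vert\geq 1-O(\varepsilon)$; on the other hand, using $\Vert\phi_x-P_{U_i}\phi_x\Vert\leq\Vert\phi_x-P_{\lbrace 0,\dots,n\rbrace\setminus\lbrace i\rbrace}^{\pi}\phi_x\Vert$ (the orthogonal projection minimizes distance) and the fact that each $\phi_x(\sigma')=P_{\mathcal{H}^{\pi(G_{\sigma'})}}\pi(g)x$ with $G_{\sigma'}\subseteq G_\tau$ and $g\in Q$, every averaged value is $O(\varepsilon)$-close to $x$, giving $\sum_i\Vert\phi_x-P_{U_i}\phi_x\Vert^2=O(\varepsilon^2)$. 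Substituting into the displayed inequality gives $1-O(\varepsilon)\leq\lambda_X^{-1}O(\varepsilon^2)$, impossible for small $\varepsilon$; choosing $\kappa$ from this threshold, which depends only on $\lambda_X$ and the combinatorics of $D(n)$, yields the uniform Kazhdan pair and hence property (T).

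The conceptual core, the Poincaré inequality, is essentially handed over by Kassabov's theorem and Corollary \ref{lambda X coro}, so the real work lies in the dictionary of the third paragraph. The hard part will be the bookkeeping forced by equivariance: since $\phi_x(\sigma')$ for a neighbour of $\sigma$ equals $P_{\mathcal{H}^{\pi(G_{\sigma'})}}\pi(g)x$ rather than simply $P_{\mathcal{H}^{\pi(G_{\sigma'})}}x$, one must absorb the connecting elements $g$ into $Q$ and check that only finitely many are needed, which is exactly where cocompactness (finiteness of $D(n)$) and finiteness of the $1$-dimensional links are used. The positivity of $\mathcal{A}(X)$ only supplies the gap $\lambda_X>0$; it is the $\pi$-independent control of the right-hand side, distilled from finitely many local comparisons, that upgrades a representation-by-representation estimate into genuine property (T).
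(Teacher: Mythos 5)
Your proposal is correct and follows essentially the same route as the paper's proof: reduction to the type-preserving case via Remark \ref{type preserving rmrk}, the spectral-gap inequality obtained from Theorem \ref{Kas Thm} together with Corollary \ref{lambda X coro}, the test function $\phi_x$, a compact Kazhdan set built from the fundamental domain $D(n)$ and elements carrying it to neighbouring chambers, and Lemma \ref{intersect lemma} to conclude (your ``assume no invariant vectors, derive a contradiction'' formulation is just the contrapositive of the paper's ``almost invariant implies invariant''). One small caveat: in your set $Q$, ``faces'' must be read as codimension-one faces only, since stabilizers of simplices outside $X(n-2)\cup X(n-1)\cup X(n)$ are not assumed compact; as your estimates never actually use the $G_\tau$'s, this is cosmetic rather than a gap.
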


\begin{proof}
Recall that a group $G$ has property (T) if and only if every finite index group of $G$ has property (T).  Thus, by Remark \ref{type preserving rmrk} we can assume that $G$ is type preserving.

Define $K \subseteq G$ to be the set
$$K = \lbrace g \in G : \exists \sigma, \sigma ' \in D(n), \vert g. \sigma ' \cap \sigma  \vert \geq n \rbrace.$$
Note that $K$ is a compact symmetric set that generates $G$ (the last fact is due to the assumption that $X$ is gallery connected).  

Let $\lambda_X$ be the smallest eigenvalue of $\mathcal{A} (X)$.  We will show that for 
$$\varepsilon = \frac{1}{2 \left(1+ \sqrt{\frac{n+1}{\lambda_X}} \right)},$$
that $(K,\varepsilon)$ is a Kazhdan pair,  i.e.,  that for every continuous unitary representation $(\pi, \mathcal{H})$ with $\mathcal{H}^{\pi (G)} = \lbrace 0 \rbrace$,  it holds for every unit vector $x \in \mathcal{H}$ that
$$\sup_{g \in K} \vert \pi (g) x - x \vert \geq \varepsilon.$$

We will prove the contra-positive,  i.e.,  that given a  continuous unitary representation $(\pi, \mathcal{H})$,  if there exists a unit vector $x \in \mathcal{H}$ such that
$$\sup_{g \in K} \vert \pi (g) x - x \vert < \varepsilon,$$
then $\mathcal{H}^{\pi (G)} \neq  \lbrace 0 \rbrace$.  

Assume that there is a unit vector $x \in \mathcal{H}$ such that
$$\sup_{g \in K} \vert \pi (g) x - x \vert < \varepsilon.$$

For every $\sigma \in D (n)$ it holds that $G_\sigma \subseteq K$ and thus for every $\sigma \in D(n)$,
\begin{align*}
\vert x - \phi_x (\sigma) \vert = \vert x - \pi \left( \frac{ \mathbbm{1}_{G_{\sigma}}}{\mu (G_\sigma )} \right) x \vert \leq 
\frac{1}{ \mu (G_{\sigma})} \int_{g \in G_{\sigma}} \vert x - \pi (g) x \vert < \varepsilon.
\end{align*}
It follows that 
\begin{align*}
\Vert \phi_x \Vert \geq  \vert x \vert - \left(  \left(  \sum_{\sigma \in D(n)} \frac{1}{\mu (G_\sigma )} \right)^{-1} \sum_{\sigma \in D(n)} \frac{1}{\mu (G_\sigma )} \vert x- \phi_x (\sigma) \vert^2 \right)^{\frac{1}{2}} > 1- \varepsilon.
\end{align*}

Similarly,  let $\sigma \in D(n)$ and $\sigma '' \in X(n)$ such that $\vert \sigma \cap \sigma '' \vert =n$.  Then there is $g ' \in K$ and $\sigma ' \in D(n)$ such that $g ' . \sigma ' = \sigma ''$.  Note that $g' G_{\sigma '} \subseteq K$ and thus 
\begin{align*}
\vert x - \phi_x (\sigma ' ) \vert = \vert x - \pi (g' ) \pi \left( \frac{ \mathbbm{1}_{G_{\sigma ' }}}{\mu (G_{\sigma ' } )} \right) x \vert \leq 
\frac{1}{ \mu (G_{\sigma ' })} \int_{g \in G_{\sigma ' }} \vert x - \pi (g' g) x \vert < \varepsilon.
\end{align*}
It follows that for every $\nu \subseteq \lbrace 0,...,n \rbrace$,  $\vert \nu \vert = n$, every $\sigma \in D(n)$ and every $\sigma '' \sim_{\nu} \sigma$,
\begin{align*}
\vert \phi_x (\sigma) - \phi_x (\sigma ') \vert < 2 \varepsilon.
\end{align*}
Thus,  for every $\sigma \in D(n)$ and every $\nu \subseteq \lbrace 0,...,n \rbrace$,  $\vert \nu \vert = n$,
\begin{align*}
\vert \phi_x (\sigma) - P_\nu^\pi \phi_x (\sigma ) \vert < 2 \varepsilon,
\end{align*}
and also
$$\Vert \phi_x -  P_\nu^\pi \phi_x \Vert < 2 \varepsilon.$$

Denote $U_i = C (X(n), \pi)_{\lbrace 0,...,n \rbrace \setminus \lbrace i \rbrace}$ and $\mathcal{A} (\pi) = \mathcal{A} (U_0,...,U_n)$.  We showed that for every $0 \leq i \leq n$,  
$$\Vert \phi_x -  P_{U_i} \phi_x \Vert < 2 \varepsilon.$$
By Corollary \ref{lambda X coro},   $\mathcal{A} (\pi)$ is a positive definite matrix and its smallest eigenvalue is $\geq \lambda_X$.  Thus $(\mathcal{A} (\pi))^{-1}$ is a symmetric positive definite matrix and its largest eigenvalue is $\leq \frac{1}{\lambda_X}$.  By Theorem \ref{Kas Thm}  it holds that 
\begin{align*}
& \Vert \phi_x - P_{U_0 \cap ... \cap U_n} \phi_x \Vert^2 \leq \\
& \left(\Vert \phi_x  - P_{U_0} \phi_x  \Vert,  ... , \Vert \phi_x  - P_{U_n} \phi_x  \Vert \right)\left( \mathcal{A} (\pi) \right)^{-1}  \left(\Vert \phi_x  - P_{U_0} \phi_x  \Vert,  ... , \Vert \phi_x  - P_{U_n} \phi_x  \Vert  \right)^t \leq \\
& \frac{1}{\lambda_X} \sum_{i=0}^n \Vert \phi_x  - P_{U_i} \phi_x  \Vert^2 < \frac{1}{\lambda_X} \sum_{i=0}^n (2 \varepsilon)^2 = \frac{4 (n+1) \varepsilon^2}{\lambda_X}.
\end{align*}

Combining the inequalities above yields that 
\begin{align*}
\Vert  P_{U_0 \cap ... \cap U_n} \phi_x \Vert \geq \Vert \phi_x \Vert -  \Vert \phi_x - P_{U_0 \cap ... \cap U_n} \phi_x \Vert > \\
1  - 2 \varepsilon-  \sqrt{\frac{4 (n+1) \varepsilon^2}{\lambda_X}} = 
1 - 2 \varepsilon \left(1+ \sqrt{\frac{n+1}{\lambda_X}} \right) =^{\text{choice of } \varepsilon} 0,
\end{align*}
i.e.,  $\Vert  P_{U_0 \cap ... \cap U_n} \phi_x \Vert > 0$.  By Lemma \ref{intersect lemma},  there is $x_0 \in \mathcal{H}^{\pi (G)}$ such that $P_{U_0 \cap ... \cap U_n} \phi_x \equiv x_0$ and since $\Vert  P_{U_0 \cap ... \cap U_n} \phi_x \Vert > 0$,  it follows that $x_0 \neq 0$,  i.e., that $\mathcal{H}^{\pi (G)} \neq \lbrace 0 \rbrace$ as needed.
\end{proof}

\section{Property (T) for groups acting on affine buildings}

In order to apply Theorem \ref{criterion thm} for affine buildings,  we will need the following result:
\begin{lemma}\cite[Lemma 4.5]{GRO}
\label{pos def lemma}
For a thick affine building $X$ of dimension $\geq 2$,  $\mathcal{A} (X)$ is positive definite.
\end{lemma}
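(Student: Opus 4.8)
The plan is to compare $\mathcal{A}(X)$ with the Gram matrix (Tits bilinear form) of the affine Coxeter diagram and to exploit the fact that the latter is positive \emph{semi}-definite with a one-dimensional radical, pushing it to strict positivity by means of the strict spectral gap of the codimension-two links. Write $\mathcal{A}(X) = I - B$, where $B$ is the symmetric nonnegative matrix with zero diagonal and $B[i,j] = \lambda_{i,j}$ for $i \neq j$; since $B \geq 0$ entrywise, $\mathcal{A}(X)$ is positive definite if and only if $\rho(B) < 1$, where $\rho$ denotes the spectral radius (equal to the largest eigenvalue for a symmetric nonnegative matrix). The first step is to identify the links: for $\eta \in X(n-2)$ of cotype $\{i,j\}$, the link $X_\eta$ is a rank-two residue of $X$, hence the incidence graph of a thick generalized $m_{ij}$-gon, where $m_{ij}$ is the label of the edge $\{i,j\}$ in the affine Coxeter diagram; thickness of $X$ forces $m_{ij} \in \{2,3,4,6\}$ and we may assume the diagram is irreducible (connected). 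Then $\lambda_{i,j} = \lambda_\eta$ is the second largest eigenvalue of the random walk on this bipartite graph.

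Next I would record the classical spectra of these incidence graphs and the resulting strict inequality $\lambda_\eta < \cos(\pi/m_{ij})$. Writing the second largest eigenvalue of the random walk as $\theta_2 / \sqrt{(s+1)(t+1)}$, with $\theta_2$ the second largest adjacency eigenvalue of a generalized $m$-gon of order $(s,t)$, the Feit--Higman theory gives $\lambda_\eta = 0$ for $m = 2$ (a complete bipartite graph), $\lambda_\eta = \frac{\sqrt q}{q+1}$ for $m=3$ (a projective plane of order $q = s = t \ge 2$), $\lambda_\eta = \sqrt{\frac{s+t}{(s+1)(t+1)}}$ for $m = 4$, and $\lambda_\eta = \sqrt{\frac{s+t+\sqrt{st}}{(s+1)(t+1)}}$ for $m = 6$. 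Comparing with $\cos(\pi/3) = \tfrac12$, $\cos(\pi/4) = \tfrac{1}{\sqrt2}$ and $\cos(\pi/6) = \tfrac{\sqrt3}{2}$, the desired strict inequalities reduce to the elementary estimates $(q-1)^2 > 0$, $(s-1)(t-1) > 0$, and a similar positivity for the hexagon, all of which hold precisely because thickness gives $s,t \ge 2$.

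Finally I would invoke the Coxeter-theoretic input together with strict Perron--Frobenius monotonicity. Let $C = I - B_{\mathrm{aff}}$ be the Tits form, with $B_{\mathrm{aff}}[i,j] = \cos(\pi/m_{ij})$ off the diagonal; since the diagram is of irreducible affine type, $C$ is positive semi-definite with a one-dimensional radical, so $\rho(B_{\mathrm{aff}}) = 1$, and $B_{\mathrm{aff}} \ge 0$ is irreducible because the diagram is connected. The inequalities of the previous paragraph give $0 \le B \le B_{\mathrm{aff}}$ entrywise, with strict inequality in every entry corresponding to an edge of the diagram; as there is at least one such edge, $B \neq B_{\mathrm{aff}}$. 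Strict Perron--Frobenius monotonicity (for a nonnegative matrix dominated by, but not equal to, an irreducible one) then yields $\rho(B) < \rho(B_{\mathrm{aff}}) = 1$, whence $\mathcal{A}(X) = I - B$ is positive definite.

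I expect the main obstacle to be the second step: extracting the second largest random-walk eigenvalue of each thick generalized $m$-gon uniformly in the parameters $(s,t)$ and certifying the strict gap below $\cos(\pi/m)$, which rests on the Feit--Higman classification and the explicit spectra of the incidence graphs. The remaining ingredients --- the identification of rank-two residues with generalized polygons, the semi-definiteness of the affine Tits form with one-dimensional radical, and the strict monotonicity of the spectral radius --- are standard, and it is their combination that converts the borderline (merely semi-definite) affine form into the strictly positive definite matrix $\mathcal{A}(X)$.
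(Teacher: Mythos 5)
Your proposal cannot be matched against an internal argument, because the paper does not prove this lemma at all: it is quoted verbatim from the external reference [GRO, Lemma 4.5]. What you have written is a self-contained proof of exactly the kind that such a reference must contain: identify the codimension-two links as incidence graphs of thick generalized $m_{ij}$-gons (here $m_{ij}\in\{2,3,4,6\}$ -- note this comes from the classification of irreducible affine Coxeter diagrams of rank $\ge 3$, not from thickness as you suggest); extract from the Feit--Higman spectra the strict bound $\lambda_{i,j}<\cos(\pi/m_{ij})$, which is where thickness $s,t\ge 2$ enters; and then beat the affine Coxeter Gram matrix $I-B_{\mathrm{aff}}$, which is only positive semidefinite with one-dimensional radical, by strict Perron--Frobenius monotonicity, $\rho(B)<\rho(B_{\mathrm{aff}})=1$. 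I checked the individual steps and they are sound: the reduction of positive definiteness of $I-B$ to $\rho(B)<1$ (valid because $B$ is symmetric and entrywise nonnegative); the spectra you quote for $m=2,3,4,6$ together with the normalization by $\sqrt{(s+1)(t+1)}$ for biregular bipartite graphs; the three elementary inequalities (for the hexagon one can use $s+t\le st$ and $4\sqrt{st}\le 2st$ for $s,t\ge2$, giving $s+t+4\sqrt{st}\le 3st<3st+3$); and the strict monotonicity of the spectral radius for a nonnegative matrix dominated by, but distinct from, an irreducible one. One small point worth making explicit: $\lambda_{i,j}$ is a maximum of $\lambda_\eta$ over the finitely many $G$-orbits of codimension-two simplices (cocompactness), so strictness survives the maximum.

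The one step you should not wave through is ``we may assume the diagram is irreducible,'' since irreducibility of $B_{\mathrm{aff}}$ is precisely what licenses the strict Perron--Frobenius step. Two routine fixes are available. Either argue that in the paper's setting reducible types do not occur: a reducible Euclidean reflection group yields a polysimplicial rather than simplicial building, and $\widetilde{A}_1$-factors would produce infinite (tree) links, contradicting the standing assumption that one-dimensional links are finite connected graphs. Or avoid the assumption altogether: whenever $i$ and $j$ lie in different components of the diagram one has $\lambda_{i,j}=0=\cos(\pi/2)$, so both $B$ and $B_{\mathrm{aff}}$ are block diagonal with blocks indexed by the irreducible components, and your argument applies block by block, each block being irreducible affine with spectral radius $1$. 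With either sentence added, your proof is complete.
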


\begin{proof}[Proof of Theorem \ref{main thm}]
Let $G$ be a locally compact, unimodular group acting geometrically on a thick affine building of dimension $\geq 2$.  By Lemma \ref{pos def lemma},  $\mathcal{A} (X)$ is positive definite and thus by Theorem \ref{criterion thm},  $G$ has property (T).
\end{proof}

\bibliographystyle{alpha}
\bibliography{bibl}

\begin{thebibliography}{TMW23}

\bibitem[BS97]{Ballmann}
W.~Ballmann and J.~\'{S}wi\c{a}tkowski.
\newblock On {$L^2$}-cohomology and property ({T}) for automorphism groups of
  polyhedral cell complexes.
\newblock {\em Geom. Funct. Anal.}, 7(4):615--645, 1997.

\bibitem[GRO22]{GRO}
Zohar Grinbaum-Reizis and Izhar Oppenheim.
\newblock Curvature criterion for vanishing of group cohomology.
\newblock {\em Groups Geom. Dyn.}, 16(3):843--862, 2022.

\bibitem[Kas11]{Kassabov}
Martin Kassabov.
\newblock Subspace arrangements and property {T}.
\newblock {\em Groups Geom. Dyn.}, 5(2):445--477, 2011.

\bibitem[Opp15]{OppAve}
Izhar Oppenheim.
\newblock Property ({T}) for groups acting on simplicial complexes through
  taking an ``average'' of {L}aplacian eigenvalues.
\newblock {\em Groups Geom. Dyn.}, 9(4):1131--1152, 2015.

\bibitem[Opp23]{OppZukB}
Izhar Oppenheim.
\newblock \.zuk's criterion for {B}anach spaces and random groups.
\newblock {\em Forum Math. Sigma}, 11:Paper No. e79, 32, 2023.

\bibitem[TMW23]{TMW}
Thomas Titz~Mite and Stefan Witzel.
\newblock A $\widetilde{C}_2$-lattice that is not residually finite.
\newblock \url{https://arxiv.org/abs/2310.03662}, 2023.

\end{thebibliography}
\end{document}